\documentclass{article}
\usepackage[affil-it]{authblk}
\usepackage{etex}
\usepackage{lineno,hyperref}



\usepackage{indentfirst}
\usepackage[below]{placeins} 

\usepackage{amsmath,amssymb,amsfonts,amsthm,latexsym}
\usepackage{amsfonts}
\usepackage[utf8]{inputenc} \usepackage{ae,aecompl}     
\usepackage{amscd}
\usepackage{mathrsfs}       
\usepackage{fancyhdr}
\usepackage{graphicx,psfrag}
\usepackage{epsf} 
\usepackage[all]{xy}
\usepackage{tikz}
\usetikzlibrary{chains,scopes,shapes,arrows,trees,matrix,positioning,fit,decorations.pathmorphing}

\usepackage[nofoot,letterpaper,left=1.75truein, top=1truein]{geometry}

\usepackage{multicol}




%




\theoremstyle{definition}

\newtheorem{thm}{Theorem}[section]



\newtheorem{lem}[thm]{Lemma}

\newtheorem{cor}[thm]{Corollary}
\newtheorem{pro}[thm]{Proposition}

\theoremstyle{definition}


\newcommand {\C}{\mathbb C}
\newcommand {\Z}{\mathbb Z}
\newcommand {\Q}{\mathbb Q}

\newcommand {\nb}{\mathcal N}
\newcommand {\RP}{\mathbb R \mathbb P}



%
%
\textwidth=6.25in \textheight=9in
\addtolength{\evensidemargin}{-.75in}
\addtolength{\oddsidemargin}{-.75in}

\parindent=0pt
\parskip=8pt

\footskip = 50pt


\begin{document}

\title{Exceptional cosmetic surgeries on homology spheres}
\author{Huygens C. Ravelomanana}
\maketitle
\date{}

\begin{abstract}
The cosmetic surgery conjecture is a longstanding conjecture in 3-manifold theory. We present a theorem  about exceptional cosmetic surgery for homology spheres. Along the way we prove that if the surgery is not a small seifert $\Z/2\Z$-homology sphere or a toroidal irreducible non-Seifert surgery then there is at most one pair of exceptional truly cosmetic slope. We also prove that toroidal truly cosmetic surgeries on integer homology spheres must be integer homology spheres.
\end{abstract}
\maketitle
\textbf{Keyword:} Dehn surgeries, cosmetic surgeries, hyperbolic knots, exceptional surgeries.

\section{introduction}
In \cite{RavelomananaS3-paper} we proved that for hyperbolic knots in $S^3$ the slope of \textit{exceptional truly cosmetic surgeries} must be $\pm 1$ and that the surgery must be irreducible toroidal and not Seifert fibred. As a consequence we showed that there are no truly cosmetic surgeries on alternating and arborescent knots in $S^3$. Here we study the problem for the case of integer homology spheres in general. Recall that a surgery on a hyperbolic knot is \textit{exceptional} if it is not hyperbolic and that two surgeries on the same knot but with different slopes are called \textit{cosmetic} if they are homeomorphic and are called \textit{truly cosmetic} if the homeomorphism preserves orientation. The cosmetic surgery Conjecture \cite[Conjecture (A) in problem 1.81 ]{Kirby-list} states that if the knot complement is boundary irreducible and irreducible then  two surgeries on inequivalent slopes are never truly cosmetic.
For more details about cosmetic surgeries we refer to \cite{Matignon-hyp-knot},\cite{Weeks}, \cite{Matignon-knot}, \cite{Matignon} and \cite{RavelomananaS3-paper}.


In this paper we study truly cosmetic surgeries along hyperbolic knots in  homology spheres.  We are concerned with the case where the two slopes are both exceptional slopes. We call such surgeries \textit{exceptional truly cosmetic surgeries}. If $K$ is a knot in an integer homology sphere $Y$, we denote $ \nb( K)$  a regular neighbourhood of $K$,   $Y_K:= Y \setminus \text{int}(\nb(K))$ the exterior of $K$ and $Y_K(r)$  Dehn surgery along $K$ with slope $r$. When the manifold is a homology sphere we identify $r$ with a rational number according to the standard meridian longitude basis where the longitude is the preferred longitude. The main result of this paper is the following.

\begin{thm}\label{my-proposition-2}
Let $K$ be a hyperbolic knot in a homology sphere $Y$. Let $0<p$ and $q<q'$ be  integers.
If $Y_K(p/q)$ is homeomorphic to $Y_K(p/q')$ as oriented manifolds, then $Y_K(p/q)$ is  either
\begin{enumerate}
\item a reducible manifold  in which case $p=1$ and $q'=q+1$,
\item a toroidal Seifert fibred manifold  in which case $p=1$ and $q'=q+1$,
\item a small Seifert  manifold with infinite fundamental group in which case either
\begin{itemize}
\item[•] $p=1$ and $|q-q'|\leq 8$.
\item[•] or $p=5, \ q'=q+1$ and $q \equiv 2 \left[\text{mod}\ 5\right] $.
\item[•] or $p=2, \ \text{and} \ q'=q+2$ or $q'=q+4$.
\end{itemize}
\item a toroidal irreducible non-Seifert fibred manifold  in which case $p=1$ and $|q'-q| \leq 3$.
\end{enumerate}
\end{thm}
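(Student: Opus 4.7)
The proof strategy is a case analysis on the type of the common exceptional filling $M \cong Y_K(p/q) \cong Y_K(p/q')$. By Thurston's hyperbolic Dehn surgery theorem, $M$ is reducible, toroidal, Seifert fibred, or has finite fundamental group. The four items in the conclusion cover the first three possibilities (with the Seifert case subdivided into toroidal Seifert, small Seifert with infinite $\pi_1$, and the toroidal non-Seifert case separated off). The first task is therefore to rule out the finite $\pi_1$ case, and the main task is to extract the stated slope restrictions in each of the four remaining cases.

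A preliminary reduction: since $H_1(M;\Z) = \Z/p\Z$ is preserved by the orientation-preserving homeomorphism, the numerator $p$ agrees on both sides, justifying the uniform notation. The Casson-Walker identity of Boyer-Lines then yields
\[
\frac{q - q'}{2p}\, \Delta''_K(1) = \lambda_{CW}(L(p,q')) - \lambda_{CW}(L(p,q)),
\]
which is a strong arithmetic constraint linking $p, q, q'$ and $\Delta''_K(1)$. Combined with the classification of lens space and spherical-space-form surgeries on knots in integer homology spheres (in the style of Ni-Wu and Greene), this obstruction eliminates the finite $\pi_1$ case entirely.

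For items (1), (2) and (4), I would use that reducible and toroidal fillings on hyperbolic knots in homology spheres occur only at integer slopes, forcing $p = 1$. In item (1), a reducible truly cosmetic surgery has the form $\Sigma \# L$ with $\Sigma$ a homology sphere summand and $L$ a lens space; matching the two fillings through the homeomorphism pins down $q' = q + 1$. In item (2), the toroidal Seifert structure together with the oriented homeomorphism forces $q' = q + 1$ by an argument parallel to the $S^3$ case of \cite{RavelomananaS3-paper}. In item (4), the JSJ decomposition of $M$ is rigidly preserved by any self-homeomorphism; counting exceptional slopes on the JSJ piece of the knot exterior containing $\partial \nb(K)$ produces the bound $|q'-q| \le 3$.

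The main obstacle is item (3), small Seifert fillings with infinite $\pi_1$. Such manifolds are classified up to orientation-preserving homeomorphism by their normalized Seifert invariants $(e_0; (\alpha_1, \beta_1), (\alpha_2, \beta_2), (\alpha_3, \beta_3))$. The truly cosmetic hypothesis translates into a system of equalities relating the normalized invariants of the two fillings, which depend in a controlled way on $p$ and $q$ (respectively $q'$). Solving this system reduces to a number-theoretic problem on the exceptional fibre data, and one must show that precisely the three families listed occur—$p=1$ with $|q-q'|\le 8$, $p=5$ with $q'=q+1$ and $q\equiv 2 \pmod 5$, and $p=2$ with $q'-q \in \{2,4\}$—without missing any and without admitting extraneous solutions. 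This careful enumeration should be where the bulk of the paper's technical work lies.
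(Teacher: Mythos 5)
There is a genuine gap --- in fact several, and the most important one is that your proposal never invokes the engine that drives the whole theorem: the Lackenby--Meyerhoff bound that any two exceptional slopes on a hyperbolic knot satisfy $\Delta(p/q,p/q')=p|q-q'|\le 8$. The paper's case (3) is \emph{not} obtained by matching normalized Seifert invariants of the two fillings (the ``bulk of the technical work'' you predict does not exist); it follows purely arithmetically from $p|q-q'|\le 8$, the linking-form congruence $q\equiv q'u^2\ [\text{mod}\ p]$, and the equality of Dedekind sums $s(q,p)=s(q',p)$ forced by the Casson surgery formula once $\Delta_K''(1)=0$. Without the distance-$8$ input you have no bound on $p$ at all, and your Seifert-invariant system would have to be solved for infinitely many $p$. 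Relatedly, your route to killing the finite-$\pi_1$ case via ``Ni--Wu and Greene'' fails: those classifications are for knots in $S^3$, not in arbitrary homology spheres. The paper instead uses the finite-surgery distance bound $\Delta\le 3$ together with the arithmetic of Lemma~\ref{my-lemma-2} to force $p=1$, concludes the filling is $S^3$ or $\Sigma(2,3,5)$, and then uses that these are L-space homology spheres to get $\Delta_K''(1)=2\neq 0$, contradicting the Boyer--Lines obstruction.

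Second, your claim that ``reducible and toroidal fillings on hyperbolic knots in homology spheres occur only at integer slopes'' is false --- the Eudave-Mu\~noz knots admit half-integral toroidal surgeries --- so your derivation of $p=1$ in cases (1), (2), (4) collapses. What is true, and what the paper uses, are bounds on the distance between \emph{pairs} of such slopes: Gordon--Luecke gives $\Delta\le 1$ for two reducible slopes (which alone yields $p=1$, $q'=q+1$ in case (1), with no need to analyze connected-sum decompositions), and the Gordon and Gordon--Wu classifications of toroidal fillings at distance $4$, $5$, and $>5$ are combined with homological computations ruling out each of the fourteen exceptional manifolds $M_i$ and the Whitehead-link fillings, to get $\Delta\le 3$ in the toroidal case. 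Your JSJ argument for case (4) does not produce this: the knot exterior is hyperbolic, so its JSJ decomposition is trivial and there is nothing to count. Finally, case (2) in the paper is a substantial $PSL_2(\C)$-character-variety argument (producing a closed essential surface that survives one filling and analyzing the resulting graph-of-groups splitting to contradict cyclicity of $H_1$); ``an argument parallel to the $S^3$ case'' is not a proof.
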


The following two corollaries  are straigtforward consequences of the theorem.

\begin{cor}\label{corollary-toroidal-give-integer}
Toroidal truly cosmetic surgeries along hyperbolic knots in integer homology spheres yield integer homology spheres.
\end{cor}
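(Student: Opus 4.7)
The plan is to read off the corollary directly from the case analysis in Theorem \ref{my-proposition-2}. Suppose $K \subset Y$ is a hyperbolic knot in an integer homology sphere and that $Y_K(p/q) \cong Y_K(p/q')$ (orientation-preserving) with $0<p$ and $q<q'$. The corollary claims that $Y_K(p/q)$ is an integer homology sphere, and since $|H_1(Y_K(p/q);\Z)| = p$ for Dehn surgery on a knot in an integer homology sphere, this is equivalent to showing $p=1$ whenever the common surgery is toroidal.

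First I would recall that a small Seifert fibred manifold, by definition, is a Seifert fibred space over $S^2$ with at most three exceptional fibres, and such a manifold contains no incompressible torus. Hence case (3) of Theorem \ref{my-proposition-2} is atoroidal and cannot occur under the toroidal hypothesis. This leaves precisely cases (1), (2) and (4) as the possibilities for $Y_K(p/q)$: a reducible manifold, a toroidal Seifert fibred manifold, or a toroidal irreducible non-Seifert manifold.

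In each of these three cases the theorem asserts that $p=1$. Therefore $|H_1(Y_K(p/q);\Z)| = 1$ in all possible cases, which is exactly the conclusion that $Y_K(p/q)$ is an integer homology sphere.

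There is no substantive obstacle here since the whole argument is an immediate unpacking of Theorem \ref{my-proposition-2}; the only point requiring any justification is the atoroidality of small Seifert manifolds, which is standard. If one wanted to be fully careful, the reducible subcase (1) could in principle refer to manifolds which happen to also contain an essential torus, but this does not affect the argument because case (1) already forces $p=1$.
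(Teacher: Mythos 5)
Your proposal is correct and matches the paper's intent: the author presents this corollary as a straightforward consequence of Theorem \ref{my-proposition-2} (equivalently, of Lemma \ref{large-distance-toroidal} combined with Lemma \ref{my-lemma-2}), and reading off $p=1$ from cases (1), (2), (4) after excluding the atoroidal case (3) is exactly that argument. Your added remarks on the atoroidality of small Seifert manifolds and on reducible manifolds that also contain essential tori are sensible clarifications but do not change the route.
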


\begin{cor}\label{corollary-at-most-1-pair}
For a hyperbolic knot in an homology sphere there is at most one pair of exceptional truly cosmetic slope which does not yield a $\Z/2\Z$-homology small Seifert surgery or a toroidal irreducible non-Seifert surgery.
\end{cor}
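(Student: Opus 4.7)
The plan is to derive Corollary \ref{corollary-at-most-1-pair} directly from Theorem \ref{my-proposition-2} by a case analysis on the surgered manifold. The hypothesis of the corollary removes the small Seifert $\Z/2\Z$-homology surgeries (the subcases of case (3) with $p=1$ and $p=5$, both of which have $|H_1|$ odd) together with the toroidal irreducible non-Seifert surgeries of case (4). What remains are cases (1) and (2), both forcing $p=1$ and $q'=q+1$, and the small Seifert subcase (3c) with $p=2$ and $q'-q\in\{2,4\}$.

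I would then assume for contradiction that $K$ admits two distinct pairs of exceptional truly cosmetic slopes lying in these remaining cases, and split according to whether the two pairs share a slope. In the shared-slope scenario, the three slopes involved yield the same oriented surgered manifold, so the theorem must also apply to the third, newly formed pair. In cases (1) or (2), that third pair has distance $q'-q=2$, which by Theorem \ref{my-proposition-2} forces it into case (3) with $p=1$ or into case (4), both excluded by hypothesis; but the common manifold, being reducible or toroidal Seifert, cannot simultaneously be small Seifert or toroidal non-Seifert, giving the contradiction. A similar topological incompatibility argument handles the shared-slope variant in subcase (3c).

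If the two pairs are disjoint, the knot carries at least four distinct exceptional slopes of very restricted form (either $1/q_i$ or $2/q_i$). Here I would combine the Lackenby--Meyerhoff bound on the pairwise distances between exceptional slopes on a hyperbolic knot with the case-by-case distance constraints of the theorem to force the four-slope configuration into a shape already ruled out in the shared-slope analysis.

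The principal obstacle will be subcase (3c): the condition $q'-q\in\{2,4\}$ permits a priori the three pairwise-cosmetic slopes $2/q$, $2/(q+2)$, $2/(q+4)$. Ruling out such a triple appears to require going beyond the bare statement of Theorem \ref{my-proposition-2}, either through a finer analysis of the Seifert invariants of the common small Seifert surgered manifold or by invoking a uniqueness result specific to truly cosmetic $p=2$ surgeries.
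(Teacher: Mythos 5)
The paper never actually proves this corollary---it is asserted, together with Corollary~\ref{corollary-toroidal-give-integer}, as a ``straightforward consequence'' of Theorem~\ref{my-proposition-2} with no argument supplied---so there is no official proof to compare you against; but your own analysis correctly shows that the derivation is \emph{not} straightforward, and your proposal does not close it. The obstacle you flag in subcase (3c) is genuine: three pairwise truly cosmetic slopes $2/q$, $2/(q+2)$, $2/(q+4)$ are compatible with every constraint in the paper (the pairwise distances are $4$ or $8$, consistent with Lackenby--Meyerhoff; the congruence~(\ref{congruence}) and the Dedekind-sum condition of Lemma~\ref{lemma-dedekind} are automatic for $p=2$ since $q\equiv q' \ [\text{mod}\ 2]$). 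Note also that ``$\Z/2\Z$-homology small Seifert surgery'' is ambiguous: you read it as excluding the odd-order subcases $p\in\{1,5\}$ of case (3), but it may instead be meant to exclude the $p=2$ subcase (the surgeries with $H_1\cong\Z/2\Z$); under that reading your principal obstacle disappears, but an analogous one reappears in the $p=1$ small Seifert subcase, where the theorem only gives $|q-q'|\le 8$ and therefore allows two disjoint cosmetic pairs $Y_K(1/q)\cong Y_K(1/q')$ and $Y_K(1/s)\cong Y_K(1/s')$ among four distinct exceptional slopes. Either way the corollary needs input beyond Theorem~\ref{my-proposition-2}.

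The second gap is your disjoint-pair step, which you leave as a hope that distance bounds ``force the four-slope configuration into a shape already ruled out.'' They do not. For two disjoint pairs of type (1) the argument works: four distinct reducible slopes $1/n$ cannot be pairwise at distance $\le 1$, contradicting Gordon--Luecke. But for two disjoint pairs of type (2), say $Y_K(1/q)\cong Y_K(1/(q+1))$ and $Y_K(1/s)\cong Y_K(1/(s+1))$ with $|q-s|=2$, all six pairwise distances are at most $3$, consistent with Lackenby--Meyerhoff, with Gordon's toroidal bounds, and with Lemma~\ref{large-distance-toroidal}; moreover the two pairs may yield non-homeomorphic manifolds, so no cosmetic condition applies to the cross terms. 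The same problem arises for a type-(1) pair coexisting with a type-(2) pair (a reducible and a toroidal slope need only be at distance $\le 3$). By contrast, your shared-slope argument is essentially sound: three pairwise cosmetic slopes $1/q,1/(q+1),1/(q+2)$ yielding a reducible or toroidal Seifert manifold would produce a cosmetic pair at distance $2$ falling under case (1) or (2) of the theorem, contradicting $q'=q+1$ there. To complete the proof you would need a genuinely new ingredient for the disjoint-pair configurations and for whichever small Seifert subcase survives the hypothesis; as written, both your attempt and the paper's unproved assertion leave these cases open.
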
 

\paragraph*{Notations.}
If a torus $T$ is a component of $\partial M$ we denote $M(s,T)$ the Dehn filling of $M$ with slope $s$ along $T$, if $\partial M$ has only one torus component we will simply write $M(s)$. In the case of surgery along a knot $K$ is a 3-manifold $Y$ we use the notation $Y_K(s)$ defined earlier.

\textit{Rational longitude.}  Let $K$ be a knot in a rational homology 3-sphere $Y$. The knot $K$ has finite order in $H_1(Y,\Z)$ so there is an integer $n$ and a surface $\Sigma \subset Y$ such that $nK=\partial \Sigma$. The intersection of $\Sigma$ with $\partial \nb (K)$ is $n$-parallel copies of a curve $\lambda_M$. The isotopy class in $\partial \nb (K)$ of this curve does not depend on the choice of the surface $\Sigma$. We call the corresponding slope the rational longitude of $K$ and denote it by  $\lambda_M$. 

We will need the following lemma from \cite{Watson-PhD}.
\begin{lem} \cite{Watson-PhD} \label{watson-lemma}
Let  $s$ be  a slope on $ \partial Y_K$. There is a constant $c_M$ such that
$$|H_1(Y_K(s);\Z)|= c_M\ \Delta(s,\lambda_M).$$
\end{lem}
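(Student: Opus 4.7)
The plan is to compute $|H_1(Y_K(s);\Z)|$ as a cyclic quotient of $H_1(Y_K;\Z)$ and isolate the slope dependence.

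First I would pin down the algebraic setup. Mayer--Vietoris for $Y = Y_K \cup \nb(K)$, combined with $H_1(Y;\Q)=0$, forces $H_1(Y_K;\Z) \cong \Z \oplus T$ for some finite abelian group $T$. By the definition of the rational longitude, $[\lambda_M]\in T$, while the meridian class $[\mu]$ has infinite order. A second Mayer--Vietoris applied to the filling $Y_K(s)=Y_K\cup(D^2\times S^1)$ yields the presentation $H_1(Y_K(s);\Z) \cong H_1(Y_K;\Z)/\langle i_*(s)\rangle$, where $i_*\colon H_1(\partial Y_K;\Z)\to H_1(Y_K;\Z)$ is induced by inclusion.

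Next I would set up coordinates: fix a basis $\{\mu,\lambda\}$ of $H_1(\partial Y_K;\Z)$ (with $\lambda$ any dual longitude) and a splitting $H_1(Y_K)=\Z g\oplus T$. Write $[\mu]=kg+t_\mu$ and $[\lambda]=\ell g+t_\lambda$ with $k\neq 0$, $s=p\mu+q\lambda$ (primitive), and $\lambda_M = a\mu + b\lambda$ (primitive). The defining torsion condition $ak+b\ell=0$ pins $(a,b)=\pm(\ell/d,-k/d)$ with $d=\gcd(k,\ell)$, and a direct determinant calculation then yields the key identity $|pk+q\ell| = d\cdot\Delta(s,\lambda_M)$.

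Finally I would apply the elementary identity
\[
\bigl|(\Z\oplus T)/\langle(N,t)\rangle\bigr| = |N|\cdot|T|\qquad(N\neq 0,\ t\in T),
\]
which follows at once from the short exact sequence $0\to T\to(\Z\oplus T)/\langle(N,t)\rangle\to\Z/N\Z\to 0$. Since $i_*(s) = (pk+q\ell)g + (pt_\mu + qt_\lambda)$, this gives
\[
|H_1(Y_K(s);\Z)| = |pk+q\ell|\cdot|T| = (d\,|T|)\cdot\Delta(s,\lambda_M),
\]
so $c_M := d\,|T|$ is the advertised constant. The only real subtlety is to check that $c_M$ is intrinsic to $Y_K$ rather than an artefact of the auxiliary basis and splitting; this is immediate, since specialising to $s=\mu$ (where $Y_K(\mu)=Y$) already forces $c_M = |H_1(Y;\Z)|/\Delta(\mu,\lambda_M)$, which is manifestly intrinsic.
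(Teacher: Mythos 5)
The paper does not prove this lemma---it is quoted from Watson's thesis without proof---so there is no internal argument to compare against. Your derivation is correct and is essentially the standard one: the rank computation for $H_1(Y_K;\Z)\cong\Z\oplus T$, the identification of the primitive torsion class $\lambda_M=\pm(\ell/d,-k/d)$, the determinant identity $|pk+q\ell|=d\,\Delta(s,\lambda_M)$, and the counting identity for $(\Z\oplus T)/\langle(N,t)\rangle$ all check out, and your closing remark correctly identifies $c_M=d\,|T|$ as intrinsic. The only point worth recording is the degenerate slope $s=\lambda_M$, where $N=pk+q\ell=0$: there the filling has infinite first homology and the formula holds only under the usual convention that $|H_1|=0$ in that case, which is how the lemma is applied in the paper.
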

Here $\Delta(r,s)$ stands for the distance between two slopes $r$ and $s$ i.e their minimal geometric intersection.

\subsection*{Acknowledgment.}
The author would like to thank Steven Boyer for helpful discussions and comments. 
 \section{Proof of Theorem~\ref{my-proposition-2}} \label{Chapter: Exceptional Cosmetic Surgery on Integer Homology Sphere}

Let us first recall a result of Boyer and Lines about the second derivative of the Alexander polynomial $\Delta''_K$ of a knot $K$. 

\begin{pro}\cite{Boyer-Lines}\label{my-proposition-1} Let $K$ be a non-trivial knot in a homology 3-sphere $Y$ and let $r$ and $s$ be two distinct slopes. If there is an orientation preserving homeomorphism between $Y_K(r)$ and $Y_K(s)$ then $\Delta''_K(1) = 0$.
\end{pro}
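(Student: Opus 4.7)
The plan is to apply the Casson--Walker invariant $\lambda_W$ of rational homology 3-spheres, which is an orientation-sensitive $\Q$-valued invariant; since the hypothesised homeomorphism is orientation-preserving we have $\lambda_W(Y_K(r))=\lambda_W(Y_K(s))$, and the strategy is to extract $\Delta''_K(1)=0$ from this equality via the surgery formula for $\lambda_W$.

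First I would normalise the two slopes. Writing $r=p/q$ and $s=p'/q'$ in lowest terms in the meridian--preferred longitude basis, Lemma~\ref{watson-lemma} (with $c_M=1$ because $Y$ is an integer homology sphere) gives $|H_1(Y_K(p/q);\Z)|=|p|$, so the homeomorphism forces $|p|=|p'|$. After reversing the sign of the longitude if necessary we may assume $r=p/q$ and $s=p/q'$ with $p>0$ and $q\ne q'$, both coprime to $p$.

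Next I would apply the Casson--Walker surgery formula for a knot in an integer homology sphere (Walker, Boyer--Lines), which has the shape
$$\lambda_W(Y_K(p/q))=\lambda_W(Y)+\frac{q}{2p}\Delta''_K(1)+T(p,q),$$
where $T(p,q)$ is a correction term built from Dedekind sums depending only on the pair $(p,q)$. Equating the expressions for the two slopes yields
$$\frac{q-q'}{2p}\Delta''_K(1)=T(p,q')-T(p,q),$$
and it remains to show that the right-hand side vanishes; this would force $\Delta''_K(1)=0$ since $q\ne q'$.

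The main obstacle is establishing this cancellation, and it is the technical heart of the proof. The orientation-preserving homeomorphism $Y_K(p/q)\cong Y_K(p/q')$ induces an isometry of the linking forms on $\Z/p\Z$ and an equality of the classical Reidemeister torsions of the two fillings. I would try to show that the Dedekind sum correction $T(p,q)$ is recoverable from exactly this topological data, so $T(p,q)=T(p,q')$ whenever the filled manifolds are orientation-preserving homeomorphic. Equivalently, one constructs a reduced Casson-type invariant $\bar{\lambda}:=\lambda_W - T$ that is a genuine homeomorphism invariant of the filled manifold and whose surgery formula has no Dedekind sum tail; then the identity $\bar{\lambda}(Y_K(p/q))=\bar{\lambda}(Y_K(p/q'))$ reads directly as $(q-q')\Delta''_K(1)/(2p)=0$, as required. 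Without this identification the cancellation is far from automatic, so the crux of the work is making precise how the Dedekind sum piece of the surgery formula is pinned down by the homeomorphism type of the surgered manifold.
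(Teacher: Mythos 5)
The paper never proves this proposition---it is imported verbatim from \cite{Boyer-Lines}---so there is no internal argument to match your proposal against; judged on its own terms, however, your proof has a genuine gap at exactly the step you yourself flag as ``the technical heart.'' Your reduction via the Casson--Walker surgery formula is fine and matches the formula the paper quotes from Saveliev, with $T(p,q)=\lambda(L(p,q))=-\tfrac{1}{2}s(q,p)$. But the entire content of the proposition is then the cancellation $T(p,q)=T(p,q')$, and the mechanism you propose for it does not work. The linking-form isometry only pins down $q$ modulo $p$ up to multiplication by the square of a unit (this is Equation~\ref{congruence} of the paper, $q\equiv q'u^2\ [\text{mod}\ p]$), and Dedekind sums are not constant on such square classes: the paper's own computation in the case $p=7$ of Lemma~\ref{my-lemma-2} records $s(1,7)=5/14$ while $s(2,7)=1/14$, even though $2\equiv 1\cdot 3^2\ [\text{mod}\ 7]$, so $L(7,1)$ and $L(7,2)$ have isometric linking forms but different correction terms. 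The Reidemeister torsion of the filling does not rescue this: $\tau\bigl(Y_K(p/q)\bigr)$ entangles the Alexander polynomial of $K$ with the lens-space factor, and extracting $s(q,p)$ from it would amount to a substantial, unproved rigidity statement about which pairs $(q,q')$ can yield orientation-preservingly homeomorphic fillings.

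Worse, given the surgery formula, the assertion $T(p,q)=T(p,q')$ is \emph{equivalent} to $\Delta_K''(1)=0$ under your hypotheses (since $q\neq q'$), so positing a ``reduced invariant'' $\bar{\lambda}=\lambda_W-T$ whose surgery formula has no Dedekind tail begs the question. The paper itself runs the logic in the opposite direction: Lemma~\ref{lemma-dedekind} uses $\Delta_K''(1)=0$ as \emph{input} to deduce $s(q,p)=s(q',p)$. To close the gap you would need the actual Boyer--Lines argument, which establishes $\Delta_K''(1)=0$ by a different route rather than by first proving the Dedekind-sum cancellation.
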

A consequence of this is the following lemma.
\begin{lem}\label{lemma-dedekind}
Let $K$ be a non-trivial knot in a homology 3-sphere $Y$. If  there is an orientation preserving homeomorphism between  $Y_K(p/q)$ and $Y_K(p/q')$ then  $s(q,p)=s(q',p)$, where$$s(q,p):=\text{sign}(p)\cdot \sum^{|p|-1}_{k=1}((\frac{k}{p}))((\frac{kq}{p})),$$
with $$ ((x))=\begin{cases} x-[x]-\frac{1}{2}, & \text{if $x \notin \Z$}, \\
0, & \text{if $x \in \Z$}.  \end{cases}$$
\end{lem}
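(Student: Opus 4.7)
The natural route is through the Casson--Walker invariant $\lambda_W$, which is the obstruction-theoretic tool that links $\Delta''_K(1)$ with Dedekind sums on the surgery side.

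First I would recall the Walker surgery formula: for a knot $K$ in an integer homology $3$-sphere $Y$ and for a slope $p/q$ with $p\neq 0$, the manifold $Y_K(p/q)$ is a rational homology sphere, and
\begin{equation*}
\lambda_W\bigl(Y_K(p/q)\bigr) \;=\; \lambda_W(Y) \;+\; \frac{q}{2p}\,\Delta''_K(1) \;-\; \tfrac{1}{2}\,s(q,p).
\end{equation*}
The second piece of input is that $\lambda_W$ is invariant under orientation-preserving homeomorphisms. Thus if $Y_K(p/q)$ and $Y_K(p/q')$ are orientation-preservingly homeomorphic then
\begin{equation*}
\frac{q}{2p}\,\Delta''_K(1) - \tfrac{1}{2}\,s(q,p) \;=\; \frac{q'}{2p}\,\Delta''_K(1) - \tfrac{1}{2}\,s(q',p).
\end{equation*}

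Now I would plug in Proposition~\ref{my-proposition-1}: the hypothesis of an orientation-preserving homeomorphism between $Y_K(p/q)$ and $Y_K(p/q')$ forces $\Delta''_K(1)=0$. With the Alexander-polynomial terms gone, the displayed identity collapses to $s(q,p)=s(q',p)$, which is exactly the conclusion of the lemma.

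The only step that requires real care is invoking the correct normalization of the Walker formula: different sources differ by factors of $2$ and by sign, but the only feature actually used here is that $\Delta''_K(1)$ and $s(q,p)$ appear as the sole $q$-dependent terms, with coefficients that are independent of $q$. Once the $\Delta''_K(1)$ term is killed by Proposition~\ref{my-proposition-1}, the equality of Dedekind sums is forced regardless of these normalization constants, so the argument is robust. No other input is needed: the fact that $p\neq 0$ (built into the assumption $0<p$ in the theorem we are building toward) guarantees that $Y_K(p/q)$ is a rational homology sphere so that $\lambda_W$ is defined.
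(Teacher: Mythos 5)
Your proof is correct and follows essentially the same route as the paper: both arguments apply the Casson--Walker surgery formula, invoke Proposition~\ref{my-proposition-1} to kill the $\Delta''_K(1)$ term, and conclude equality of the Dedekind sums. The only cosmetic difference is that the paper writes the correction term as $\lambda(L(p,q))$ and then substitutes the Boyer--Lines computation $\lambda(L(p,q))=-\tfrac{1}{2}s(q,p)$, whereas you write the Dedekind sum directly into the surgery formula.
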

\begin{proof}
By using the surgery formula\footnote{we have a difference of sign here due to our convention for lens spaces}  \cite[Corollary 4.5]{Saveliev} for the Casson invariant $\lambda$, an invariant for oriented rational homology sphere, we have
$$\lambda(Y)+\lambda(L(p,q))+\frac{q}{2p}\Delta_K''(1)=\lambda(Y_K(p/q))=\lambda(Y_K(p/q'))=\lambda(Y)+\lambda(L(p,q'))+\frac{q'}{2p}\Delta_K''(1)$$
but $\Delta''_K(1) = 0$ by Proposition~\ref{my-proposition-1} so $\lambda(L(p,q))=\lambda(L(p,q'))$. On the other hand Boyer and Lines in \cite{Boyer-Lines} have computed the Casson invariant for a lens space $L(p,q)$ to be
 $$\lambda(L(p,q))=-\frac{1}{2}s(q,p).$$
\end{proof}

Let $Y$ be an integer homology sphere and let $K$ be a knot in $Y$. 
Assume $Y_K(p/q)\cong + Y_K(p/q')$ then we have an  induced isomorphism on homology and between the linking pairing of $Y_K(p/q)$ and $Y_K(p/q')$. More precisely let  $\left[\mu \right]_q $ (resp.   $\left[\mu \right]_{q'} $) be the meridian generator of $H_1(Y_K(p/q))$ (resp. $H_1(Y_K(p/q'))$), then we can find a unit $ u \in \left(\Z/p\Z\right)^*$ such that 
 $$f_*\left[\mu \right]_q =\left[\mu \right]_{q'} u.$$

On the other hand if we denote $lk_q$ (resp. $lk_{q'}$) the linking pairing of $Y_K(p/q)$ and $Y_K(p/q')$ respectively and $f_*$ the map induced on homology then we can check that   
$$lk_q(\left[\mu \right]_q,\left[\mu \right]_q)=-q/p \ \ \left[\text{mod}\ \Z\right].$$
The isomorphism $f_*$ then gives
$$
\begin{aligned}
lk_q(\left[\mu \right]_q,\left[\mu \right]_q) & =lk_q(f_*\left[\mu \right]_q,f_*\left[\mu \right]_q) \ \ \ \left[\text{mod}\ \Z\right]\\
&=lk_q(\left[\mu \right]_{q'},\left[\mu \right]_{q'}) u^2\ \ \ \left[\text{mod}\ \Z\right]
\end{aligned}$$
and it follows that
\begin{equation}\label{congruence}
-\frac{q}{p} \equiv -\frac{q'}{p} u^2 \ \ \ \left[\text{mod}\ \Z\right], \  \ \text{i.e} \ \ \ q \equiv q' u^2 \ \ \ \left[\text{mod}\ p\right].
\end{equation}
 
As a consequence  we have the following lemma.

\begin{lem}\label{my-lemma-2}
Let $K$ be a hyperbolic knot in a $\Z$-homology sphere $Y$. Let $p/q$ and $p/q'$ be exceptional slopes such that $0<p$ and  $q < q'$. If there is an orientation preserving homeomorphism between $Y_K(p/q)$ and $Y_K(p/q')$ then one of the following holds:

\begin{itemize}
\item[(a)]  $p=1$ and $|q-q'|\leq 8$.
\item[(b)] $p=5, \ q'=q+1$ and $q \equiv 2 \left[\text{mod}\ 5\right] $.
\item[(c)] $p=2, \ \text{and} \ q'=q+2$ or $q'=q+4$.
\end{itemize}

\end{lem}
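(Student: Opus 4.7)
The plan is to combine three sources of rigidity. First, since $p/q$ and $p/q'$ are both exceptional slopes on a hyperbolic knot complement, I invoke the Lackenby--Meyerhoff theorem (the $8$-theorem, sharpening Agol--Lackenby) to bound the distance between them:
\[
\Delta(p/q,p/q') \;=\; p\,(q'-q) \;\le\; 8.
\]
This pins $p$ to one of the values $1,2,\dots,8$ and sharply restricts $q'-q$ in each case. The remaining two tools are the linking-form congruence \eqref{congruence}, which forces $q \equiv q'u^{2}\pmod{p}$ for some unit $u \in (\Z/p\Z)^{*}$, and the Dedekind-sum equality $s(q,p)=s(q',p)$ from Lemma~\ref{lemma-dedekind}.

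With those three ingredients in hand, I would run through the possible values of $p$ one by one. For $p=1$ no congruence is imposed and the $8$-theorem alone delivers case (a). For $p=2$ the only unit square is $1$, so $q \equiv q' \pmod{2}$ combined with $q'-q \le 4$ forces $q'-q\in\{2,4\}$, which is case (c). For $p=3$ and $p=4$ the set of unit squares is again just $\{1\}$, so the congruence forces $p\mid q'-q$, which is incompatible with the distance bound; these are discarded. For $p=6$ and $p=8$ the bound forces $q'=q+1$, but then $q$ and $q+1$ cannot both be coprime to $p$, so these are also discarded.

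The interesting cases are $p=5$ and $p=7$, where the distance bound gives $q'=q+1$ and the congruence alone leaves some residues open. For $p=5$ the unit squares are $\{1,-1\}$ modulo $5$, and since $q \equiv q' \pmod{5}$ is impossible with $q'=q+1$, one is forced into $q \equiv -q'\pmod{5}$, i.e.\ $q \equiv 2 \pmod{5}$, giving case (b). I would then check consistency with Lemma~\ref{lemma-dedekind} by noting $s(3,5)=s(-2,5)=-s(2,5)$, so equality requires $s(2,5)=0$; this follows from Dedekind reciprocity together with $s(1,2)=0$. For $p=7$ the set of unit squares modulo $7$ is $\{1,2,4\}$, which leaves only a couple of pairs $(q\bmod 7, q'\bmod 7)$ to inspect; for each surviving pair I would apply Dedekind reciprocity (using $s(-q,p)=-s(q,p)$ and $s(q,p)=s(q\bmod p,p)$) to show $s(q,7)\neq s(q+1,7)$, thereby eliminating $p=7$.

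The expected main obstacle is the $p=7$ case: the congruence is not strong enough on its own, so one has to push Lemma~\ref{lemma-dedekind} through explicit Dedekind-sum computations modulo $7$. Everything else is either immediate from the $8$-theorem, a coprimality parity argument, or the small observation that squaring collapses $(\Z/p\Z)^{*}$ to $\{1\}$ for $p\in\{2,3,4\}$.
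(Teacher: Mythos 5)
Your proposal is correct and follows essentially the same route as the paper: the Lackenby--Meyerhoff bound $p|q-q'|\le 8$, the linking-form congruence $q\equiv q'u^2\ [\text{mod}\ p]$ to kill $p\in\{3,4\}$ and constrain $p\in\{2,5,7\}$, and the Dedekind-sum equality of Lemma~\ref{lemma-dedekind} to eliminate $p=7$. Your extra consistency check that $s(2,5)=s(3,5)=0$ in case (b) is a small addition the paper omits, but the argument is otherwise the same case analysis.
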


\begin{proof}
Since the slopes are exceptional by \cite[Theorem 1.2]{Lackenby-Meyerhoff} $\Delta(p/q,p/q')=|pq'-qp|=p |q-q'| \leq 8$  so $p \leq 8$. If $p \in \left\{8,7,6,5\right\}$ then $|q-q'|\leq 1$ and  $q'=q+1$. Since one of $q$ and $q+1$ is even and $p,q$ (resp. $p,q'$) are relatively prime  $p$ cannot be $6$ or $8$.  Similarly if $p\in \left\{4,3\right\}$ then  $|q-q'|\leq 2$ and $q'\in \left\{q+1,q+2\right\}$. If $p=2$ then   $|q-q'|\leq 4$ and $q'\in \left\{q+1,q+2,q+3,q+4\right\}$ but  we must have $q\equiv q'$ $[\text{mod}\ 2]$ by Equation~\ref{congruence} so $q'\in \left\{q+2,q+4\right\}$.  

We  use the same equation for $p\in \left\{7,5,4,3\right\}$ to obtain the result.  
\begin{itemize}

\item[•] Case $p=7$. The squares modulo $7$ are $1$, $2$ and $4$, they are all units so
  $$q \equiv q+1 \ \ \ \left[\text{mod}\ 7\right]\ \ \ \text{ or}\ \ \ \  q \equiv 2 (q+1) \ \ \ \left[\text{mod}\ 7\right]\ \ \ \text{ or}\ \ \ \  q \equiv 4 (q+1) \ \ \ \left[\text{mod}\ 7\right].$$
The first equation is impossible and the last two are equivalent to
  $$q \equiv 5 \ \ \ \left[\text{mod}\ 7\right]\ \ \ \text{ or}\ \ \ \  q \equiv 1 \ \ \ \left[\text{mod}\ 7\right].$$
 By a straightforward computation
$$s(5,7)=\frac{-1}{14}, \ \ \ s(6,7)=\frac{-5}{14}, \ \ \ s(1,7)=\frac{5}{14}, \ \ \ s(2,7)=\frac{1}{14}.$$ 
Using the fact that $s(a,p)=s(b,p)$ if $a \equiv b \ \ \ \left[\text{mod}\ p\right]$, we get

If $q \equiv 5 \ \ \ \left[\text{mod}\ 7\right]$, 
$$s(q,7)=s(5,7)=\frac{-1}{14}\neq \frac{-5}{14}=s(6,7)=s(q+1,7)$$
If $q \equiv 1 \ \ \ \left[\text{mod}\ 7\right]$, 
$$s(q,7)=s(1,7)=\frac{5}{14}\neq \frac{1}{14}=s(2,7)=s(q+1,7)$$
This contradicts Lemma \ref{lemma-dedekind}  which says that we must have $s(q,p)=s(q',p)$.
\item[•] Case $p=5$. The squares modulo $5$ are $1$ and $4$, the only unit among them is $1$, therefore 
  $$q \equiv q+1 \ \ \ \left[\text{mod}\ 5\right]\ \ \ \text{ or}\ \ \ \  q \equiv 4(q+1) \ \ \ \left[\text{mod}\ 5\right].$$
The first equation has no solution and the second is equivalent to 
$$q \equiv 2 \ \ \ \left[\text{mod}\ 5\right].$$


\item[•] Case $p=4$.  We have   $q'\in \{q+1,q+2\}$ and the only square modulo $4$ is $1$ therefore
  $$q \equiv q+1 \ \ \ \left[\text{mod}\ 4\right]\ \ \ \text{ or}\ \ \ \  q \equiv q+ 2 \ \ \ \left[\text{mod}\ 4\right].$$
These equations have no solutions so the case $p=4$ is not possible.

\item[•] Case $p=3$.   We have $q'\in \{q+1,q+2\}$ and the only square modulo $3$ is $1$, therefore this case is also impossible.

\end{itemize}
\end{proof}

The next two theorems of Gordon and Wu about toroidal surgery from \cite{toroidal} and \cite{punctured-tori} will play a key part in the proof of Theorem~\ref{my-proposition-2}. The first theorem is about pairs of toroidal slopes at distance $4$ or $5$ apart and the second theorem is for distance greater than $5$. 
\begin{thm}\cite{toroidal}\label{toroidal:Theorem 1.1}
There exist fourteen hyperbolic $3$-manifolds $M_i$, $1 \leq i \leq 14$, such that
\begin{enumerate}
\item $\partial M_i$ consists of two tori $T_0, T_1$ if $i \in
\{1,2,3,14\}$, and a single torus $T_0$ otherwise;

\item there are slopes $r_i, s_i$ on  $T_0$ such that $M_i(r_i,T_0)$ and $M_i(s_i,T_0)$ are toroidal,
\begin{itemize}
\item $\Delta(r_i,s_i) = 4$ if $i \in \{1,2,4,6,9,13,14\}$, and
\item $\Delta(r_i,s_i) = 5$ if $i \in \{3,5,7,8,10,11,12\}$;
\end{itemize}

\item if $M$ is a hyperbolic 3-manifold with toroidal Dehn fillings
$M(r), M(s)$ where $\Delta(r,s) = 4$ or $5$, then $(M,r,s)$ is
equivalent either to $(M_i,r_i,s_i)$ for some $1\leq i \leq 14$, or to
$(M_i(t,T_1),r_i,s_i)$ where $i \in \{1,2,3,14\}$ and $t$ is a slope on
the boundary component $T_1$ of $M_i$.

\end{enumerate}
Here we define two triples $(N_1, r_1, s_1)$ and $(N_2, r_2,
s_2)$ to be {\it equivalent} if there is a homeomorphism from $N_1$ to $N_2$ which
sends the boundary slopes $(r_1, s_1)$ to $(r_2, s_2)$ or $(s_2,
r_2)$.
\end{thm}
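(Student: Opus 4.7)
The plan is to attack this via the intersection-graph machinery developed by Culler--Gordon--Luecke--Shalen and refined by Gordon--Luecke and by Gordon--Wu. Suppose $M$ is a hyperbolic 3-manifold with toroidal fillings $M(r), M(s)$ along a torus $T_0 \subset \partial M$ with $\Delta(r,s) \in \{4,5\}$. Choose essential tori $\hat F_r \subset M(r)$ and $\hat F_s \subset M(s)$ meeting the cores of the attached solid tori in the minimal possible number of points; restricting to $M$ yields essential properly embedded punctured tori $F_r, F_s \subset M$ of boundary slope $r$ and $s$ on $T_0$. Isotope them so that $F_r \cap F_s$ consists of arcs and circles with $|F_r \cap F_s|$ minimal. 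The fat-vertex graphs $G_r \subset \hat F_r$ and $G_s \subset \hat F_s$, whose vertices record the boundary circles of $F_r, F_s$ and whose edges record the arcs of intersection, carry all of the relevant combinatorial data.

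The first step is to bound $n_r := |\partial F_r|$ and $n_s := |\partial F_s|$. Standard arguments, using that an essential torus in $M(r)$ remains incompressible after drilling the filling solid torus, force $n_r, n_s \le 2$ once $\Delta(r,s) \ge 3$. Thus each vertex of $G_r$ has valence $n_s \Delta \in \{4,5,8,10\}$, and similarly for $G_s$. One then labels the edge endpoints by the boundary components they hit and partitions the edges into positive and negative via the parity rule, which controls the cyclic order of labels around each vertex.

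Next I would apply the full suite of standard obstructions --- Scharlemann cycles, extended Scharlemann cycles, great webs, and the Gordon--Luecke extremal-vertex/great-$x$-cycle arguments --- together with the hyperbolicity of $M$, to eliminate the vast majority of labelled graph configurations. A Scharlemann cycle produces a disk on the opposite side that pushes $M$ towards reducibility or boundary-reducibility, while a great web produces an essential sphere or disk in $M$. Hyperbolicity of $M$ then rules both out. What survives is a short finite list of admissible combinatorial types for the pair $(G_r, G_s)$.

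For each surviving type one reconstructs $M$, up to Dehn filling of an additional boundary torus $T_1$, as a regular neighbourhood of $F_r \cup F_s \cup \partial M$, and matches it to one of the fourteen candidate manifolds $M_i$ by an explicit homeomorphism; the four cases with a free boundary torus $T_1$ are exactly those combinatorial types in which the neighbourhood does not fill all of $M$. The existence half of the theorem, showing that each $M_i$ really does admit the claimed pair of toroidal fillings, is a direct verification from explicit surgery or gluing descriptions. The main obstacle, and the true weight of \cite{toroidal}, is the combinatorial case analysis of step three: since valences can be as large as ten and two graphs must be compatibly analyzed, the initial pool of labelled graphs is enormous, and eliminating each non-surviving pattern via the cycle/web obstructions is delicate and lengthy.
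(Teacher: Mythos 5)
This statement is not proved in the paper at all: it is Gordon--Wu's classification of hyperbolic manifolds with toroidal fillings at distance $4$ or $5$, imported verbatim from \cite{toroidal} and used as a black box in the proof of Lemma~\ref{large-distance-toroidal}. So there is no in-paper argument to compare yours against; the only meaningful comparison is with the original memoir, and against that standard your proposal is a roadmap rather than a proof. You have correctly identified the framework Gordon and Wu actually use --- punctured essential tori $F_r, F_s$ of boundary slopes $r,s$, the pair of labelled intersection graphs $G_r \subset \hat F_r$, $G_s \subset \hat F_s$, the parity rule, and Scharlemann-cycle/web obstructions --- but the two load-bearing steps are exactly the ones you do not carry out.

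First, the assertion that ``standard arguments force $n_r, n_s \le 2$ once $\Delta(r,s)\ge 3$'' is wrong, and it is not a minor slip: bounding and then case-analysing the numbers of boundary components $n_1, n_2$ is a large fraction of the Gordon--Wu memoir, which has to treat the cases $n_1, n_2 \ge 3$, $n_1 = 2 \le n_2$, etc.\ separately before anything like your ``valence $n_s\Delta \in \{4,5,8,10\}$'' picture is available. Second, the sentence ``what survives is a short finite list of admissible combinatorial types'' is precisely the theorem; the elimination of configurations and the reconstruction of each surviving $(G_r,G_s)$ as one of the fourteen $M_i$ (including identifying which four retain a free boundary torus $T_1$, and verifying that each candidate really is hyperbolic and really admits the two toroidal fillings) occupies well over a hundred pages and cannot be compressed into an appeal to ``the full suite of standard obstructions.'' In short: right method, but the proposal defers the entire content of the result to steps it does not perform, and its one concrete quantitative claim is false as stated. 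For the purposes of this paper the honest move is the one the author makes --- cite \cite{toroidal} --- rather than attempt to reprove it.
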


 Let $W$ be the exterior of the Whitehead link and let $T_0$ be a boundary component of $W$. Choosing a standard meridian-longitude basis $\mu,\lambda$ for $H_1(T_0)$ we  can identify  slopes $T_0$ with elements of $\Q\cup \{1/0\}$.  
The manifolds  $W(1),\ W(2),\ W(-5),$ $ W(-5/2)$  are hyperbolic and they all admits a pair of toroidal slopes $r,s$ with $\Delta(r,s)>5$. Gordon proved that these examples are the only possibilities for hyperbolic manifolds with  pair of toroidal slopes at distance $>5$.

\begin{thm}\cite{punctured-tori}\label{theorem-Gordon-punctured-tori}
Let $M$ be an irreducible 3-manifold and $T$ a torus component of $\partial M$. If two slopes $r$ and $s$ on $T$ are toroidal then either
\begin{enumerate}
\item $\Delta(r,s)\leq 5$; or
\item $\Delta(r,s) =6 $ and $M$ is homeomorphic to $W(2)$; or
\item $\Delta(r,s) =7 $ and $M$ is homeomorphic to $W(-5/2)$; or
\item $\Delta(r,s) =8 $ and $M$ is homeomorphic to $W(1)$ or $W(-5)$.
\end{enumerate}
\end{thm}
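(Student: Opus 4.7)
The plan is to use the graph-of-intersection machinery developed by Culler--Gordon--Luecke--Shalen and refined by Gordon--Luecke--Wu, which reduces the theorem to a finite combinatorial analysis on two graphs drawn on tori. Set $\Delta := \Delta(r,s)$ and assume $\Delta \geq 6$, aiming to show $M$ is one of $W(1), W(2), W(-5), W(-5/2)$ with $\Delta$ matching the list. Choose essential tori $\hat F_r \subset M(r)$ and $\hat F_s \subset M(s)$ so that $F_r := \hat F_r \cap M$ and $F_s := \hat F_s \cap M$ are essential punctured tori in $M$ whose boundaries on $T$ have slopes $r$ and $s$, with $n := |\partial F_r \cap T|$ and $m := |\partial F_s \cap T|$ minimal over all such choices. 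Isotope $F_r, F_s$ to be transverse with $|F_r \cap F_s|$ minimal, and use irreducibility of $M$ together with essentiality of the punctured tori to eliminate all closed components of $F_r \cap F_s$.

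Next, form the intersection graphs $G_r \subset \hat F_r$ and $G_s \subset \hat F_s$: the fat vertices are the disks $\hat F_\bullet \setminus F_\bullet$, and the edges are the arc components of $F_r \cap F_s$, with labels at each endpoint recording which fat vertex of the opposite graph it meets. Each vertex of $G_r$ has valence $m\Delta$ and each vertex of $G_s$ has valence $n\Delta$. Invoke the standard toolbox: the parity rule constrains which label pairs occur on an edge as a function of the parities of its endpoints; irreducibility of $M$ rules out monogon faces; and a Scharlemann cycle in $G_r$ bounds a disk whose image in $M(s)$ is attached to a Mobius band or punctured projective plane, forcing strong topological structure on $M(s)$.

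The combinatorial heart of the argument is a face-counting and parallel-edges analysis on the torus. Euler characteristic together with the minimum face-size coming from the no-monogon and Scharlemann cycle constraints forces $G_r$ (or $G_s$) to contain large families of mutually parallel edges when $\Delta \geq 6$. These parallel families organize into extended Scharlemann cycles whose length grows with $\Delta$. Each extended Scharlemann cycle of length $\ell$ produces an explicit topological piece: thickening $F_r \cup F_s$ together with the disk family bounded by the cycle yields an identifiable submanifold of $M$, namely the exterior of a small tangle or cabled Seifert piece. By matching the possible such submanifolds against the parity rule and the Scharlemann cycle data, one concludes that $M$ is homeomorphic to a Dehn filling $W(t)$ of the Whitehead link exterior $W$ on one cusp.

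The main obstacle is the final identification step: pinning down exactly which slopes $t$ yield an irreducible manifold $W(t)$ possessing two toroidal slopes at distance precisely $\Delta$. After establishing $M \cong W(t)$, one must enumerate the fillings of one cusp of $W$ for which the resulting manifold is irreducible and admits two distinct toroidal slopes on the remaining boundary, then directly compute the distance between those slopes. This is carried out using the explicit geometry of $W$, its hidden symmetry interchanging the cusps, and a case analysis of essential tori in each $W(t)$. The output is the assertion that the only examples are $t=2$ giving $\Delta=6$, $t=-5/2$ giving $\Delta=7$, and $t\in\{1,-5\}$ giving $\Delta=8$, completing the classification.
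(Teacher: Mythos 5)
This statement is quoted verbatim from Gordon's paper \cite{punctured-tori}; the paper under review offers no proof of it, so there is no internal argument to compare against. Measured against Gordon's actual proof, your outline correctly identifies the method: take essential punctured tori $F_r$, $F_s$ with minimal boundary, form the pair of intersection graphs on the capped-off tori, and run the parity rule, no-monogon, and Scharlemann-cycle analysis to force the structure of $M$ when $\Delta\geq 6$. As a proof, however, this is a roadmap rather than an argument. The two load-bearing steps --- (i) that Euler-characteristic and parallel-edge counting for $\Delta\geq 6$ forces extended Scharlemann cycles whose associated submanifolds pin $M$ down to a one-cusp filling $W(t)$ of the Whitehead link exterior, and (ii) the enumeration of which $t$ actually admit two toroidal slopes at distance $6$, $7$, or $8$ --- are asserted in a sentence each, and they constitute essentially the entire content of Gordon's (long) paper. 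Nothing in your write-up explains why the bound is $8$ rather than $9$, nor why exactly the four fillings $W(1)$, $W(2)$, $W(-5)$, $W(-5/2)$ survive; these are not routine consequences of the toolbox you invoke.

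Two technical points. First, for the graphs to exist at all you need the essential tori in $M(r)$ and $M(s)$ to meet the filling solid tori, i.e.\ you need $M$ itself to be atoroidal (in the application here $Y_K$ is hyperbolic, so this is satisfied, but your setup should state it --- otherwise an essential torus already in $M$ gives toroidal fillings at arbitrary distance and the theorem as stated is false). Second, the consequence you attribute to a Scharlemann cycle (a disk attached to a M\"obius band, producing a punctured projective plane) is the one used in the reducible and lens-space filling arguments; in the toroidal setting the standard use of a Scharlemann cycle is to build a new essential surface with fewer boundary components or to constrain $H_1$ of the filled manifold, contradicting minimality of $n$ or $m$. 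This does not derail the outline, but it signals that the combinatorial core has not actually been engaged.
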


We can now get more restrictions on  the slopes which gives cosmetic toroidal fillings.
\begin{lem}\label{large-distance-toroidal}
Let $K$ be a knot in a $\Z$-homology sphere $Y$. Let  $r,s$ be two slopes on $\partial Y_K$. If $Y_K(r)$ and $Y_K(s)$ are toroidal and if there is an orientation preserving homeomorphism between them, then $\Delta(r,s) \leq 3$.
\end{lem}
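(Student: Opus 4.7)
The strategy is to combine Lemma~\ref{my-lemma-2}, which bounds $\Delta(r,s)\leq 8$ and constrains the allowed triples $(p,q,q')$, with Gordon's and Gordon--Wu's classifications of pairs of toroidal Dehn fillings (Theorems~\ref{toroidal:Theorem 1.1} and~\ref{theorem-Gordon-punctured-tori}). It suffices to eliminate each distance $\Delta(r,s)\in\{4,5,6,7,8\}$ left open by Lemma~\ref{my-lemma-2}.

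For $\Delta(r,s)\geq 6$ I apply Theorem~\ref{theorem-Gordon-punctured-tori} to obtain $Y_K\cong W(\alpha)$ for some $\alpha\in\{1,2,-5,-5/2\}$. The Whitehead link has linking number zero, so both longitudes of $W$ are nullhomologous in $H_1(W)$, and a direct Mayer--Vietoris computation gives $H_1(W(a/b))\cong\Z\oplus\Z/|a|\Z$ whenever $a/b$ is in lowest terms. Since $Y_K$ is the exterior of a knot in an integer homology sphere, $H_1(Y_K)\cong\Z$, which forces $|a|=1$ and leaves only $Y_K\cong W(1)$. I would then compare the two toroidal slopes of $W(1)$ (explicit in Gordon--Wu) to the candidates $(1/q,1/(q+8))$ and $(2/q,2/(q+4))$ with $q,q+4$ odd that are permitted by Lemma~\ref{my-lemma-2}, and rule them out either because the slopes do not take the required form in the meridian--longitude framing, or because the two fillings are distinguished as oriented manifolds by the structure of their JSJ decompositions.

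For $\Delta(r,s)\in\{4,5\}$ Theorem~\ref{toroidal:Theorem 1.1} identifies $(Y_K,r,s)$ with one of fourteen specific triples $(M_i,r_i,s_i)$, modified by an additional Dehn filling on a second boundary torus when $i\in\{1,2,3,14\}$. For each $M_i$ I would compute $H_1$ (together with its dependence on the auxiliary filling slope $t$ where relevant) and use $H_1(Y_K)\cong\Z$ to eliminate most cases; for the survivors I would test the Dedekind sum equality $s(q,p)=s(q',p)$ and the congruence $q\equiv q' u^2\pmod{p}$ from Lemma~\ref{my-lemma-2} against the explicit $(r_i,s_i)$. The hardest single case is $W(1)$ at $\Delta=8$: it is a genuine hyperbolic knot exterior in $S^3$, and when $p=1$ both fillings are integer homology spheres, so by Proposition~\ref{my-proposition-1} they have coinciding Casson invariants and homological or Casson-type obstructions cannot distinguish them. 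Any obstruction must therefore be extracted from the JSJ decomposition directly, and this is where the bulk of the proof effort is concentrated.
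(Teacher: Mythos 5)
Your overall architecture matches the paper's proof: split into $\Delta(r,s)>5$ (Theorem~\ref{theorem-Gordon-punctured-tori}, reducing to the four Whitehead-link fillings) and $\Delta(r,s)\in\{4,5\}$ (Theorem~\ref{toroidal:Theorem 1.1}, reducing to the fourteen $M_i$), then eliminate cases by homology and the arithmetic constraints of Lemma~\ref{my-lemma-2}. Your elimination of $W(2)$, $W(-5)$, $W(-5/2)$ via $H_1(W(a/b))\cong\Z\oplus\Z/|a|\Z\neq\Z$ is fine and if anything slightly cleaner than the paper's computation of $|H_1(Y_K(r))|$ from the surgery presentation. But there is a genuine error in how you dispose of the surviving case $Y_K\cong W(1)$, i.e.\ the figure-eight knot exterior. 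You assert that Proposition~\ref{my-proposition-1} shows the Casson invariants of the two fillings coincide, so that ``Casson-type obstructions cannot distinguish them,'' and you defer the whole case to an unspecified JSJ argument. You have the logic of Proposition~\ref{my-proposition-1} backwards: it says that an orientation-preserving homeomorphism between distinct surgeries \emph{forces} $\Delta''_K(1)=0$. For the figure-eight knot $\Delta_K(t)=-t+3-t^{-1}$, so $\Delta''_K(1)=-2\neq 0$, and the surgery formula $\lambda(Y_K(1/q))=\lambda(Y)+\tfrac{q}{2}\Delta''_K(1)$ shows the Casson invariant \emph{does} separate the fillings. This one-line observation is exactly how the paper kills $W(1)$; the ``bulk of the proof effort'' you reserve for a JSJ analysis is unnecessary, but as written your proposal leaves this case unproved.

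The $\Delta(r,s)\in\{4,5\}$ half is also only an outline, and two of the fourteen cases do not fall to the tools you list. For $M_1,M_2,M_3$ (two boundary tori), the homology constraint only forces the auxiliary filling slope to have numerator $1$, which makes $Y_K$ a knot exterior in $S^3$; the paper then needs \cite[Theorem 1.4]{RavelomananaS3-paper} to force the cosmetic slopes to be $\pm 1$, hence $\Delta=2$, a contradiction --- neither the Dedekind-sum test nor the congruence $q\equiv q'u^2\pmod p$ appears, and you cite no substitute. For $M_8$ and $M_9$, $H_1$ alone does not rule out the manifold; the paper must first pin down the unknown framing using the lens-space fillings $M_8(-1)=L(4,1)$, $M_9(-1)=L(8,3)$ together with Lemma~\ref{watson-lemma}, and only then show the two toroidal fillings have distinct $|H_1|$. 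Your plan as stated does not contain the ingredients needed for either of these cases.
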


\begin{proof}

We will distinguish the cases $\Delta(r,s) > 5$  and $\Delta(r,s) = 5$, or $4$. 

Let $W$ be the Whitehead link exterior. By Theorem \ref{theorem-Gordon-punctured-tori}   if  $\Delta(r,s) > 5$ then  either
\begin{itemize}
\item[•] $\Delta(r,s)=6$ and $Y_K$ is homeomorphic to $W(2)$
\item[•] $\Delta(r,s)=7$ and $Y_K$ is homeomorphic to $W(-5/2)$
\item[•] $\Delta(r,s)=8$ and $Y_K$ is homeomorphic to $W(1)$ or $W(-5)$
\end{itemize}
The manifold $Y_K(r)$ is then obtained by surgery on the Whitehead link with coefficients $\{2,a_1/b_1\}$ or $\{-5/2,a_2/b_2\}$ or $\{1,a_3/b_3\}$ or  $\{-5,a_4/b_4\}$.  We can then compute the order of the first homology using this coefficient, 
$$
|H_1(Y_K(r))|=\begin{vmatrix}
2& b_1\  \text{lk}(K_1,K_2)\\
\text{lk}(K_2,K_1)& a_1
\end{vmatrix}=\begin{vmatrix}
2& 0\\
0& a_1
\end{vmatrix}
=2a_1
$$
where $K_1,K_2$ denotes the two components of the Whitehead link and $\text{lk}(K_1,K_2)$ their linking number. Similarly we get for the other possibilities
$$
|H_1(Y_K(r))|=\begin{vmatrix}
-5& 0\\
2& a_2
\end{vmatrix}
=-5a_2, \ \ \ \text{or} \ \ \ 
|H_1(Y_K(r))|=\begin{vmatrix}
-5& 0\\
0& a_4
\end{vmatrix}
=-5a_4.
$$

On the other hand if $\Delta(r,s) > 5$  then $Y_K(r)$ must be a homology sphere by Lemma \ref{my-lemma-2}. Therefore these three possibilities cannot occur. The remaining case is $Y_K(r)=W(1)$ which is the figure-8 complement and there are no truly cosmetic surgeries along the figure-8 knot, as we can check for instance that $\Delta_{\text{figure-8}}''(1)\neq 0$ and use Proposition~\ref{my-proposition-1}.

Now we can assume $\Delta(r,s) \in \{4, 5\}$. We will do a case by case study using Theorem \ref{toroidal:Theorem 1.1}.

\begin{itemize}
\item[•] Case 1. $Y_K$ is one of $M_1,M_2,M_3$. 

The manifolds  $M_1,M_2,M_3$ are the exterior of the following links \cite{toroidal}

\begin{figure}[h!]
\centerline{\epsfbox{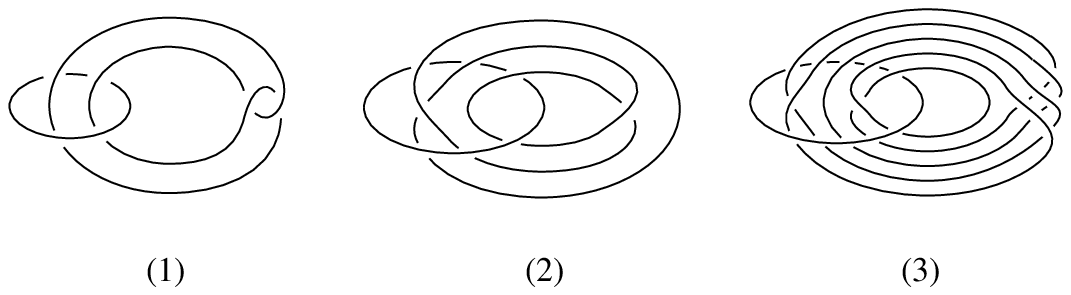}}
\end{figure}

For $i\in \{1,2,3\}$ we will denote by $K'_i$ and $K''_i$ the leftmost and rightmost components of the above links. From \cite{toroidal} we know that the component $T_1$ is the boundary of $\nb(K'_i)$.
Let $t=a/b$ be a slope on $\nb(K'_i)$ written in its Seifert framing. If $a\neq 1$ then $H_1\left(M_i(t)\right)=\Z \oplus \Z/a\Z \neq \Z = H_1(Y_K)$. Therefore we must have $a=1$. But in this case we have a knot complement in $S^3$ and we know from \cite[Theorem 1.4]{RavelomananaS3-paper} that the surgery must be $\pm 1$ therefore $\Delta(r,s)=2$ which is not the case here.

\item[•] Case 2. $Y_K \cong M_{14}$. Let $t$ be a slope on the boundary component $T_1$   of $M_{14}$, and let $K_t$ be the core of the Dehn filling solid torus in $M_{14}(t,T_1)$. From \cite[Theorem 22.3]{toroidal} we can compute $$H_1\left(M_{14}(t,T_1)\right)/H_1(K_t)= \Z/2\Z \oplus \Z/2\Z.$$ Therefore $H_1\left(M_{14}(t)\right)\neq \Z = H_1(Y_K)$.

\item[•] Case 3.  $Y_K$ is one of $M_4$ and $M_5$. From \cite[Theorem 22.3]{toroidal} we can also compute $$H_1(M_4(r))=\Z, \ \ \text{ and }\ \  H_1(M_5(r))=\Z \oplus \Z/4\Z.$$
These situations are not possible since $Y_K(r)$ is a rational homology sphere.

\item[•] Case 4. $Y_K$ is one of $M_6,M_7,M_{10},M_{11},M_{12}, M_{13}$. By \cite[Lemma 22.2]{toroidal} $Y_K$ admits a Lens space surgery. With respect to the framing in \cite{toroidal} these lens space surgeries  are
$$
  \begin{array}{lll}
   M_6(\infty)  = L(9,2) \quad  &   M_7(\infty)  = L(20,9) \quad  & M_{10}(\infty)  = L(14,3) \\
  M_{11}(\infty)  = L(24,5)  \quad  & M_{12}(\infty)  = L(3,1)  \quad &    M_{13}(\infty) = L(4,1)
\end{array}
$$
From this we can deduce that $|\text{Tor}\left(H_1(Y_K)\right)| \neq 1$ which is not possible since $H_1(Y_K) =\Z$.
\item[•] Case 5. $Y_K$ is one of $M_8$ and $M_9$. From \cite[Lemma 22.2]{toroidal} the manifolds $M_8$ and $M_9$ has two toroidal surgeries and one lens space surgery listed as follows with respect to the framing used in \cite{toroidal}
$$
  \begin{array}{lll} 
    M_8(0), \quad & M_8(-5/4), 
    \quad  & M_8(-1)  = L(4,1) \\
    M_9(0), \quad & M_9(-4/3),  \quad  & M_9(-1)  = L(8,3) 
\end{array}
$$
For $i \in \{8,9\}$ let $a=|\text{Tor}\left(H_1(M_i)\right)|$ and $l$ be the order of the preferred rational longitude $\lambda_{M_i}$. We are going to express the framing used in \cite{toroidal} according to our standard basis $\{\mu, \lambda_{M_i}\}$. Let $\lambda$  be the framing used in \cite{toroidal}. Then the $-1$ slope in this framing can be written $-\mu + (p\mu+\lambda_{M_i})=(p-1)\mu +\lambda_{M_i}$. 
Using the fact that $|H_1\left(L(4,1)\right)|=4$ and $|H_1\left(L(8,3)\right)|=4$,  with Lemma~\ref{watson-lemma} we get
$$|H_1\left(M_8(-1)\right)|=4=\Delta\left((p-1)\mu +\lambda_{M_8}\ ;\ \lambda_{M_8}\right)\ la=|p-1| la,$$
$$|H_1\left(M_9(-1)\right)|=8=\Delta\left((p-1)\mu +\lambda_{M_9}\ ;\  \lambda_{M_9}\right)\  la=|p-1| la.$$
Since $Y_K$ is a knot complement in an integer homology sphere, if $Y_K$ is one of $M_8$ or $M_9$ then we must have $l=a=1$. Therefore $p\in \{-3,5\}$ for $M_8$ and $p\in \{9,-7\}$ for $M_9$. We can then deduce
$$
  \begin{array}{ll} 
   H_1(M_8(0))= \Z/5\Z \ \ \text{or} \  \Z/3\Z, \quad & H_1(M_8(-5/4))=  \Z/15\Z \ \ \text{or} \ \Z/17\Z, \\
   H_1(M_9(0))=\Z/9\Z\ \ \text{or} \  \Z/7\Z, \quad & H_1(M_9(-4/3))=\Z/23\Z  \ \ \text{or} \ \Z/25\Z, 
\end{array}
$$
Therefore $M_8(0)$ and $M_8(-5/4)$ are not homeomorphic and the same is true for $M_9(0)$ and $M_8(-4/3)$.  We can conclude that $Y_K$ cannot be one of $M_8$ or $M_9$.
\end{itemize}
\end{proof}

The last lemma implies in particular that toroidal truly cosmetic surgeries on integer homology sphere must be integer homology spheres.


The next preliminary result addresses the case of Seifert fibred toroidal surgeries. 
Before going into it we need a bit of $PSL_2(\C)$-character variety theory. We refer to \cite{Boyer-Zhang} for more details about character variety theory.

Let $X(G)$ denote the $PSL_2(\C)$-character variety of a finitely generated group $G$. When $G=\pi_1(Z)$ where $Z$ is a path-connected space, we shall write $X(Z)$ for $X(\pi_1(Z))$.  Recall that $X(G)$ is a complex algebraic variety and a surjective homomorphism $G\twoheadrightarrow H$ induces an injective morphism $X(H) \hookrightarrow X(G)$ by precomposition. A curve $X_0 \subset X(G)$ is called non-trivial if it contains
the character of an irreducible representation. Each  $\gamma \in X(G)$ determines an element $f_{\gamma}$ of the coordinate ring $\C[X(G)]$ where if $\rho: G \to PSL_2(\C)$ is a representation and $\chi_{\rho}$ the associated point in $X(G)$, then 
$f_{\gamma}(\chi_{\rho})=\text{trace}(\rho(\gamma))^2-4$.
When $G=\pi_1(M)$, any slope $r$ on $\partial M$  determines  an element of
$\pi_1(M)$,  well-defined up to conjugation and taking inverse. Hence it induces a well-defined element $f_r \in \C[X(M)]$.


\begin{lem}\label{my-lemma-3}
Let $K$ be a hyperbolic knot in an integer homology sphere $Y$. Let $r=p/q$ and $r'=p/q'$ be exceptional slopes such that $0<p$ and  $q < q'$.  If $Y_K(r)$  is homeomorphic to $Y_K(r')$ as oriented manifolds and is Seifert fibred and toroidal, then $p=1$ and $q'=q+1$.
\end{lem}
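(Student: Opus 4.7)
The plan is to combine the preceding lemmas with $PSL_2(\C)$-character variety techniques. Since the common surgery manifold $N := Y_K(p/q) \cong Y_K(p/q')$ is toroidal, Lemma~\ref{large-distance-toroidal} gives the distance bound $\Delta(p/q, p/q') = p(q' - q) \leq 3$. Intersecting with the possibilities from Lemma~\ref{my-lemma-2}, the case $p = 5$ forces $\Delta = 5$ and the two sub-cases of $p = 2$ force $\Delta \in \{4, 8\}$, all incompatible with $\Delta \leq 3$. Only $p = 1$ with $q' - q \in \{1, 2, 3\}$ survives, so the task reduces to excluding $q' - q = 2$ and $q' - q = 3$.

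For these residual cases the strategy is to exploit the Seifert-plus-toroidal structure of $N$. Since $N$ is Seifert fibred with infinite fundamental group (it contains $\pi_1(T^2) = \Z^2$) and its essential torus provides a non-trivial graph-of-groups decomposition of $\pi_1(N)$, this group admits a positive-dimensional variety of conjugacy classes of irreducible $PSL_2(\C)$-representations. Such a family can be produced, for instance, by deforming representations built from the Bass--Serre splitting along the essential torus, with each vertex group mapping via its Fuchsian representation coming from the Seifert fibring. Pulling these back along the surjection $\pi_1(Y_K) \twoheadrightarrow \pi_1(N)$ induced by the $p/q$-filling yields a non-trivial curve $X_0 \subset X(Y_K)$; and because the class of $p/q \in \pi_1(Y_K)$ dies in $\pi_1(N)$, the regular function $f_{p/q}$ vanishes identically on $X_0$.

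The hard part is bringing the second slope $p/q'$ into this picture. The orientation-preserving homeomorphism $Y_K(p/q) \to Y_K(p/q')$ transports the above family of representations, producing a second non-trivial curve $X_0' \subset X(Y_K)$ on which $f_{p/q'}$ vanishes identically. One then applies Culler--Shalen semi-norm theory to $X_0$ (and to $X_0'$): hyperbolicity of $Y_K$ ensures that these are genuine norms on $H_1(\partial Y_K; \R)$, and the vanishing locus of the trace function on each non-trivial curve detects a distinguished slope. Careful comparison of these two norms should force $\Delta(p/q, p/q') \leq 1$, which together with $p = 1$ yields $q' = q + 1$, as required.
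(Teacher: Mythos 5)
Your reduction to $p=1$ with $q'-q\in\{1,2,3\}$ is exactly the paper's first step and is fine. The rest of the proposal has a genuine gap at its crucial final stage. The sentence ``careful comparison of these two norms should force $\Delta(p/q,p/q')\leq 1$'' is not an argument, and its premise is false: on a curve $X_0$ where $f_{p/q}$ vanishes identically, the Culler--Shalen seminorm of the slope $p/q$ is zero (it is the degree of $f_{p/q}|_{X_0}$), so $X_0$ yields only a seminorm, never a genuine norm, regardless of the hyperbolicity of $Y_K$. Moreover, producing two separate curves $X_0$ and $X_0'$, one adapted to each filling, does not let you play the two slopes against each other; the standard seminorm inequalities compare slopes on a \emph{single} curve. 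What is actually needed --- and what the paper supplies --- is a single non-trivial curve in $X(Y_K)$ on which \emph{both} $f_{p/q}$ and $f_{p/q'}$ are bounded at every ideal point. The paper gets this by first showing the base orbifold $\mathcal{B}$ of the Seifert fibration is hyperbolic (ruling out $S^2(2,2,2,2)$, $\mathbb{T}^2$, $\RP^2(2,2)$, the Klein bottle and $\RP^2(a,b)$ via $|H_1(Y_K(r))|=1$), so that $\mathcal{T}(\mathcal{B})$ forces a subvariety of $X(Y_K)$ of complex dimension at least $2$ on which $f_r$ is constant; cutting this down by a level set of $f_{r'}$ gives the required curve. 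Then \cite[Proposition 4.10]{Boyer-Zhang} produces a closed essential surface $S\subset Y_K$ compressing in $Y_K(r)$ but incompressible in $Y_K(r')$ whenever $\Delta(r,r')>1$, and the contradiction comes from a topological analysis ($S$ horizontal of genus $\geq 2$, separating, bounding two twisted $I$-bundles, whence a surjection $H_1(Y_K(r'))\twoheadrightarrow \Z/2\Z\oplus\Z/2\Z$ contradicting cyclicity). None of this machinery, nor any substitute for it, appears in your proposal.

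A secondary issue: your source of a positive-dimensional family of irreducible characters (bending deformations along the Bass--Serre splitting with Fuchsian vertex representations) is only asserted, not constructed, and even granted it would give you a curve rather than the two-dimensional family the argument actually requires; the dimension count is what makes it possible to impose constancy of the second trace function without killing the curve.
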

\begin{proof}

Let  $\mathcal{B}$ be  the base orbifold for $Y_K(r)$. Since $Y_K(r)$ is toroidal with finite first homology $\mathcal{B}$ cannot be  spherical. Moreover it cannot be a sphere with strictly less than $4$ cone points. Thus $\mathcal{B}$ must be either  hyperbolic  or one among: $S^2(2,2,2,2)$, $\mathbb{T}^2$, $\RP^2(2,2)$ or the Klein bottle.
Since we assume that $Y_K(r)$ and $Y_K(r')$ are toroidal, by lemma~\ref{large-distance-toroidal} $\Delta(r,r') \leq 3$, so $p\leq 3$. Lemma \ref{my-lemma-2} then implies that  $p=1$ or $p=2$. If $p=2$ then $q'=q+2$ or $q'=q+4$ and it follows that $\Delta(r,r')=4$ or $8$, but this contradicts the fact that $\Delta(r,r') \leq 3$. Therefore we must have $p=1$.
Furthermore using the fact that $Y_K(r)$ is a Seifert fibred manifold, we have the following surjection in first homology: 
$H_1(Y_K(r)) \twoheadrightarrow H_1(\mathcal{B})$,
 thus $|H_1(Y_K(r))|=p= 1 \geq  |H_1(\mathcal{B})|$. However we know that $|H_1(S^2(2,2,2,2))|=|H_1(\RP^2(2,2))|=8$, $H_1(\mathbb{T}^2)=\Z\oplus \Z$ and $H_1(\text{Klein bottle})=\Z\oplus\Z/2\Z$. Thus   $\mathcal{B}$ must be  hyperbolic. 
 
By the same argument as above $\mathcal{B}$ cannot be $\RP^2(a,b)$ since   $|H_1(\RP^2(a,b))|=2ab > 1$.



By work of Thurston \cite{Thurston}, since $\mathcal{B} \neq \RP^2(a,b)$  the real dimension of the Teichm\"uller space $\mathcal{T}(\mathcal{B})$ of $\mathcal{B}$ is at least $2$. Moreover $\mathcal{T}\subset X(\pi_1^{orb}(\mathcal{B}))$ where $\pi_1^{orb}(\mathcal{B})$ is the orbifold fundamental group of $\mathcal{B}$. On the other hand we have
$$\pi_1(M)\twoheadrightarrow \pi_1(M(r)) \twoheadrightarrow \pi_1^{orb}(\mathcal{B})$$ 
which induces a sequence of inclusions 
$$X(Y_K) \supset X(Y_K(r)) \supset X(\pi_1^{orb}(\mathcal{B})) \supset \mathcal{T}(\mathcal{B}).$$
Therefore the complex dimension of $X(Y_K(r))$ is at least $1$. We want to prove that it contains a subvariety of complex dimension at least $2$.  Assume on the contrary that all components of $X(Y_K(r))$ have  complex dimension $1$. In this case $\mathcal{T}(\mathcal{B})$ 
 would be an open set in a non-trivial curve $X_0 \subset X(Y_K(r))$. When $\chi_{\rho} \in \mathcal{T}(\mathcal{B})$, $\rho$ is the holonomy of a hyperbolic structure on $\mathcal{B}$ and it is well known that if $\gamma \in \pi_1^{orb}(\mathcal{B})$ has infinite order, then $f_{\gamma}(\chi_{\rho})$ is a real number.
  Deforming $\chi_{\rho}$ in $\mathcal{T}(\mathcal{B})$ shows that $f_{\gamma}|_{X_0}$
is non-constant and  must take some non-real values. This contradicts the fact that it is
real-valued on the open subset  $\mathcal{T}(\mathcal{B})\subset X_0$.
Thus $X(Y_K)$ has a subvariety of complex dimension $2$ or larger on which $f_r$ is constant and which contains the character of an irreducible representation. Hence if $r'\neq r$  is any
other slope, we can then construct a non-trivial curve $X_0 \subset X(Y_K)$ on which both $f_r$ and $f_{r'}$ are constant. Indeed let $X$ be this two dimensional subvariety, if $f_{r'}|_{X}$ is constant then we are done, otherwise we can take a regular value $z_0\in \C$ of $f_{r'}|_{X}$, the preimage $f_{r'}|_{X}^{-1}(z_0)$  is a codimension one subvariety of $X$ and we can take $X_0=f_{r'}|_{X}^{-1}(z_0)$. It follows that $f_r|_{X_0}$  is constant for each slope. In particular  for each ideal point $\tilde{x}$ of $X_0$ and slope $s\in \partial Y_K$, $\tilde{f}_s(\tilde{x})\in \C$. Now \cite[Proposition 4.10 \& Claim of page 786]{Boyer-Zhang}  imply that there is a closed essential surface $S\subset  Y_K$  which compresses in $Y_K(r)$ but stays incompressible in $Y_K(s)$ if $\Delta(s,r) >1$. 

Suppose we have $\Delta(r,r') \geq 2 $, then $S$ must be incompressible in $Y_K(r')$. Since $Y_K$ is hyperbolic it has no incompressible torus. Therefore $S$ must have genus at least $2$ and is a horizontal surface. 

On the other hand $Y_K\subset Y$ and $Y$ is a $\Z$-homology sphere so $S$ must separate $Y_K$ and also $Y_K(r')$. Indeed $H_2\left(Y\right)=0$ so $\left[S\right]=0$ and  $S$  separates.
 Let $M_1$ and $M_2$ be the two components of $Y_K(r') \setminus S$. They are both interval semi-bundles with base $\mathcal{B}$. It follows that if $\Sigma_i$ is the core surface of $M_i$, then $\pi_1(\Sigma_i) \cong \pi_1(M_i) $ for $i=1,2$. On the other hand since $\partial M_i =S$ is connected, we have a $2$ to $1$ connected cover $\partial M_i \to \Sigma_i$. Then $\pi_1(S)$ is an index two subgroup of  $\pi_1(\Sigma_i)$, in particular it is normal. Using  Van-Kampen theorem we have
$$\pi_1(Y_K(r'))=\pi_1(\Sigma_1) \ast_{\pi_1(S)} \pi_1(\Sigma_2)$$
 and $\pi_1(S)$ is normal in $\pi_1(Y_K(r'))$ since it is normal in both component of the amalgam. Hence
 $$\frac{\pi_1(Y_K(r'))}{\pi_1(S)}=\left(\frac{\pi_1(\Sigma_1)}{\pi_1(S)}\right) \ast \left(\frac{\pi_1(\Sigma_2)}{\pi_1(S)}\right)\cong \Z/2Z \ast \Z/2\Z$$
and we have a surjection $\pi_1(Y_K(r')) \twoheadrightarrow \Z/2\Z \ast \Z/2\Z$. This induces a surjection in first homology $H_1(Y_K(r')) \twoheadrightarrow \Z/2\Z \oplus \Z/2\Z$, which contradicts the fact that $H_1(Y_K(r'))$ is cyclic.
Therefore $\Delta(r,r')=p|q-q'| \leq 1$ which implies $p=1$ and $q'=q+1$.
\end{proof}



We can now prove the main theorem.

\begin{thm}
Let $K$ be a hyperbolic knot in a homology sphere $Y$. Let $0<p$ and $q<q'$ be  integers.
If $Y_K(p/q)$ is homeomorphic to $Y_K(p/q')$ as oriented manifolds, then $Y_K(p/q)$ is  either
\begin{enumerate}
\item a reducible manifold  in which case $p=1$ and $q'=q+1$,
\item a toroidal Seifert fibred manifold  in which case $p=1$ and $q'=q+1$,
\item a small Seifert  manifold with infinite fundamental group in which case either
\begin{itemize}
\item[•] $p=1$ and $|q-q'|\leq 8$.
\item[•] or $p=5, \ q'=q+1$ and $q \equiv 2 \left[\text{mod}\ 5\right] $.
\item[•] or $p=2, \ \text{and} \ q'=q+2$ or $q'=q+4$.
\end{itemize}
\item a toroidal irreducible non-Seifert fibred manifold  in which case $p=1$ and $|q'-q| \leq 3$.
\end{enumerate}
\end{thm}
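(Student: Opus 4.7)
My plan is to run a case analysis on the topological type of the common filling $N := Y_K(p/q) \cong Y_K(p/q')$. Since the fillings are exceptional, Perelman's geometrization reduces us to the cases that $N$ is reducible, toroidal, or small Seifert fibred, and the arithmetic restrictions packaged in Lemma \ref{my-lemma-2} already constrain $(p,q,q')$ for every exceptional cosmetic pair to the short list appearing in item (3) of the statement. The remaining work is then to combine Lemma \ref{my-lemma-2} with a geometric distance bound tailored to the topological type of $N$.

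If $N$ is toroidal Seifert fibred, then Lemma \ref{my-lemma-3} is exactly the conclusion $p=1$, $q'=q+1$. If $N$ is toroidal, irreducible, and not Seifert fibred, Lemma \ref{large-distance-toroidal} gives $\Delta(p/q,p/q') = p|q-q'|\leq 3$; intersecting this bound with the three possibilities of Lemma \ref{my-lemma-2}, the cases $p=5$, $|q-q'|=1$ (distance $5$) and $p=2$, $|q-q'|\in\{2,4\}$ (distances $4$ and $8$) are excluded, leaving only $p=1$ with $|q'-q|\leq 3$. If $N$ is a small Seifert manifold with infinite fundamental group, Lemma \ref{my-lemma-2} is itself the conclusion. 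For the reducible case I would invoke the known distance bound $\Delta(r,s)\leq 1$ for pairs of reducible Dehn fillings on a hyperbolic knot exterior, which forces $p|q-q'|\leq 1$ and hence $p=1$, $q'=q+1$.

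The only scenario still to be eliminated is that $N$ is small Seifert with finite fundamental group, i.e.\ cyclic (a lens space or $S^3$) or finite non-cyclic (a spherical space form). In the cyclic case, the Cyclic Surgery Theorem of Culler-Gordon-Luecke-Shalen gives $\Delta(p/q,p/q')\leq 1$, forcing $p=1$; together with $|H_1(N)|=p=1$ and the Poincaré conjecture this gives $N=S^3$. I would then combine Proposition \ref{my-proposition-1} and the Casson surgery formula used in Lemma \ref{lemma-dedekind} with a Heegaard Floer correction-term obstruction to rule out two such $S^3$-slopes on $K$. In the finite non-cyclic case, the Boyer-Zhang Finite Surgery Theorem provides $\Delta\leq 3$, and the same combination with Lemma \ref{my-lemma-2} and the linking form congruence \eqref{congruence} leaves no compatible configuration. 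The hardest step I anticipate is this clean exclusion of the finite $\pi_1$ small Seifert case---in particular the $S^3$ subcase---since Proposition \ref{my-proposition-1} on its own is not strong enough, and one must call on a finer invariant (Heegaard Floer $d$-invariants or a Casson-Walker refinement) in conjunction with the distance bounds above.
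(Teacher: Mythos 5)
Your case decomposition and your treatment of cases (1), (2), (4), and the infinite-$\pi_1$ small Seifert case coincide with the paper's: reducible slopes are handled by the Gordon--Luecke distance bound $\Delta\leq 1$, the toroidal Seifert case is exactly Lemma~\ref{my-lemma-3}, the toroidal non-Seifert case is Lemma~\ref{large-distance-toroidal} intersected with Lemma~\ref{my-lemma-2}, and case (3) is Lemma~\ref{my-lemma-2} itself. Up to that point the proposal is correct and essentially identical to the paper.

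The genuine gap is the finite fundamental group case, which you yourself flag as the hardest step but do not actually close. Two problems. First, your assertion that in the finite non-cyclic case the combination of the Boyer--Zhang bound $\Delta\leq 3$, Lemma~\ref{my-lemma-2}, and the congruence \eqref{congruence} ``leaves no compatible configuration'' is false: these inputs force $p=1$ (since $p=2$ would require $|q-q'|\geq 2$, giving $\Delta\geq 4$, and $p=5$ gives $\Delta=5$), and once $p=1$ the congruence \eqref{congruence} is vacuous and $|q'-q|\leq 3$ is perfectly compatible, so an extra ingredient is unavoidable. Second, your proposed extra ingredient --- ``a Heegaard Floer correction-term obstruction'' --- is a statement of intent rather than an argument. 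The paper's resolution is concrete: with $p=1$ the filling is an integer homology sphere with finite $\pi_1$, hence $S^3$ or $\Sigma(2,3,5)$ (this absorbs both your cyclic and non-cyclic subcases at once, so the separate appeal to the Cyclic Surgery Theorem is unnecessary); both of these are L-space homology spheres, and by \cite[Lemma 3.3]{Ravelomanana} a knot exterior inside such a manifold admitting the relevant surgery has $\Delta_K''(1)=2\neq 0$, which contradicts Proposition~\ref{my-proposition-1}. Without citing that lemma (or supplying an equivalent computation of $\Delta_K''(1)$ or of the $d$-invariants you allude to), the finite case remains open in your write-up.
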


\begin{proof} 
The manifold $Y_K(r)$ is either reducible, Seifert fibred or toroidal. If it is reducible then by \cite[Theorem 1.2]{Gordon-Luecke-3} $\Delta(p/q,p/q')= p|q-q'|=1$. If it is toroidal and Seifert fibred then we have (1) which is given by Lemma \ref{my-lemma-3}. The remaining possibilities are then (3), (4) and the case $\pi_1(Y_K(r))$ is finite. The proofs of (3) and (4) follow from  Lemma~\ref{my-lemma-2}.  We are now left with the last possibility. Assume that $\pi_1(Y_K(r))$ is finite. By \cite[Theorem 1.1]{finite-paper} the distance between two finite slopes is at most $3$, so $\Delta(p/q,p/q')=p|q'-q| \leq 3$. In particular $p\in \{1,2,3\}$, but by Lemma \ref{my-lemma-2}, $p\in \{1,2,5\}$ thus $p=1$ or $p=2$. If $p=2$ then $|q'-q| \geq 2$ by  Lemma \ref{my-lemma-2} and $\Delta(p/q,p/q')=4 > 3$  therefore we can only have $p=1$. It follows that $Y_K(r)$ is a homology sphere with finite fundamental group which implies $Y_K(r)=\Sigma\left(2,3,5\right)$ or $Y_K(r)=S^3$.
If $Y_K(r)=\Sigma\left(2,3,5\right)$ or $S^3$ then $Y_K\subset \Sigma\left(2,3,5\right)$ or $S^3$. Let $Z$ denote either $\Sigma\left(2,3,5\right)$ or $S^3$. Then $Y_K=Z \setminus \nb(K')$ where $K'$ is a non-trivial  knot in $Z$ 
for which there is a non trivial slope which gives $\Sigma\left(2,3,5\right)$. We notice that both $\Sigma\left(2,3,5\right)$ and $S^3$ are L-space homology spheres so by \cite[Lemma 3.3]{Ravelomanana}  $\Delta_K''(1)= 2\neq 0$. Therefore by Proposition~\ref{my-proposition-1} there is no orientation preserving homeomorphism between  $Y_K(r)$ and $Y_K(r')$.

\end{proof}

\bibliographystyle{gtart}
\bibliography{exceptional-cosmetic-surgery-on-homology-spheres}
\thanks{University of Georgia, Department of Mathematics, 606 Boyd GSRC, Athens GA, USA.\\  Email: huygens@cirget.ca}

\end{document}